\DeclareMathOperator{\1}{\mathbbm{1}}
\theoremstyle{plain}
\newtheorem{thm}{Theorem}
\newtheorem*{thm*}{Theorem}
\newtheorem{lem}{Lemma}
\theoremstyle{definition}
\newtheorem{defn}{Definition}
\theoremstyle{remark}
\newtheorem{remk}{Remark}
\newcommand{\be}{\begin{equation}}
\newcommand{\ee}{\end{equation}}
\newcommand{\lin}{\underset{n\to\infty}{\lim}}
\newcommand{\ve}{\varepsilon}
\newcommand{\vf}{\varphi}
\newcommand{\mbR}{{\mathbb R}}
\newcommand{\mbN}{{\mathbb N}}
\renewcommand{\ln}{\log}
\newcommand{\cF}{{\cal F}}
\newcommand{\supp}{\supp{\rm supp}}
\newcommand{\E}{\mathrm{E}}
\newcommand{\Pb}{\mathrm{P}}
\newcommand{\toP}{\stackrel{\mathrm{P}}{\to}}
\newcommand{\toas}{\stackrel{ {a.s.}}{\to}}
\newcommand{\mbZ}{\mathbb{Z}}
\begin{document}
\title{A functional limit theorem for for excited random walks}

\author{ 
Andrey Pilipenko\footnote{Institute of Mathematics, National
Academy of Sciences of Ukraine, Kyiv, Ukraine,  and National Technical University of Ukraine ``KPI'', Kyiv, Ukraine
\newline e-mail: pilipenko.ay@yandex.ua} }
\maketitle
\begin{abstract}
\noindent We consider the limit behavior of an  excited random walk (ERW), i.e., a random walk 
whose  transition probabilities  depend on the number of times the walk has visited to 
the current state. We prove that  an ERW being naturally scaled
converges in distribution to an excited Brownian motion that satisfies an SDE, where the drift of the unknown process depends 
on its local time. Similar result was obtained by Raimond and Schapira, their  proof was based on  the Ray-Knight type theorems.
We propose a new method of investigations based on a study of the Radon-Nikodym density of  the ERW  distribution with respect to the distribution of a
 symmetric random walk.
\end{abstract}
\noindent Key words:
Excited random walks; excited Brownian motion; invariance principle.

\section{Introduction and results}
Let $\{X(k), k\geq 0\}$ be a sequence of $\mbZ$-valued random variables such that $|X(k+1)-X(k)|=1, k\geq 0.$
Denote by $\cF_n:=\sigma(X(0),X(1),\dots,X(n))$
 the filtration generated by $\{X(k)\}$ .
\begin{defn}
A random walk (RW) $\{X(k)\}$ is called an excited random walk (ERW) associated with a (may be random) sequence $\{\ve_i, i\geq 0\}\subset (-1,1)$ if
\be\label{eq_ERW}
\Pb\left(X(k+1)-X(k)=1 | \cF_k   \right)= 1- \Pb\left(X(k+1)-X(k)=-1 | \cF_k   \right)= p_i,
\ee
where $i= |\{j\leq k \; : \, X(j)=X(k)\}|,\ p_i=\frac12 (1+\ve_i).$
\end{defn}
Note that $\{X(k)\}$ is not a Markov chain, generally, and the study of traditional topics of the theory of stochastic processes such as 
recurrence, invariance principles, etc., is a non-trivial one for ERW. It demands new ideas and approaches, see for example \cite{Basdevant, Dolgopyat, KosyginaZerner, Merkl, PemantleVolkov, RaimondSchapira_Excited, Zerner} and references therein.

It was proved by Raimond and Schapira \cite{RaimondSchapira_Excited} that if 
$\ve_i=\ve_i^{(n)}=\frac{1}{\sqrt{n}}\vf(\frac{i}{\sqrt n})$, where $\vf $ is a 
bounded Lipschitz function, then the sequence of processes
$\{X_n(t):= \frac{X^{(n)}([nt])}{\sqrt n} , \ t\geq 0\}_{n\geq 1}$ converges in distribution 
in $D([0,\infty))$ to excited Brownian motion that is a solution to the following SDE
$$
dY(t)=\vf(L_Y(t,{Y_t})) dt + dW(t), 
$$
where $W$ is a Wiener process, $L_Y(t,x)$ is the local time of $Y$ at $x.$

They studied  the process
$\nu(i,k)=|\{j\leq i\ : \ X(j)=k\}|$ as a function of the spatial coordinate $k.$  It was proved that some scaling of $\nu$
taken at some Markov moments converges   to a solution of a Bessel type SDE
 that appears in a spirit of the Ray-Knight theorem, see also \cite{Norris}. Then a sequence $X(k)$ (and a process $Y_t$) were reconstructed from $\nu$ (and the local time $L_Y$, respectively).
 The corresponding proofs used the neat martingale 
technique. However the number of details they have checked was really large.

We propose a different method for proving of the corresponding result. We study the Radon-Nikodym density of $\{X(k), 0\leq k\leq n\}$
with respect to the distribution of a symmetric RW. Then we use Gikhman and Skorokhod result \cite{GS_abs} on absolute continuity of the limit process together with the Skorokhod theorem on a single probability space, and invariance principle  for the local times of random walks \cite{Borodin}. 

This method was used in \cite{P_Khomenko}  for studying the limit behavior of an RW with modifications at 0 whose
 transition probabilities are defined
as in \eqref{eq_ERW}, 
 where 
$$
i= |\{j\leq k \; : \, X(j)=0\}|,\ p_i=(\frac12+i\Delta)\wedge 1,
$$
 $\Delta>0$   is a size of modifications. It was proved there that
   $ X_n\Rightarrow X_\infty $ in the scheme of series, 
   where $\Delta_n=c n^{-\alpha}, \ c>0, \ \alpha>0,$ 
   $$
   X_n(t)= 
\begin{cases}
\frac{X_{\Delta_n}([nt])}{\sqrt n}, & \alpha\geq 1,\\
 \frac{X_{\Delta_n}([nt])}{ n^{1-\frac{\alpha}2}}, & \alpha\in(0,1),
\end{cases}\ \ 
   X_\infty(t)= 
\begin{cases}
 W(t), & \alpha> 1,\\
 \sqrt{c}\int_0^t L_{X_\infty}(s,0) ds+W(t), & \alpha= 1,\\
 \eta t, & \alpha\in(0,1),
\end{cases}
$$
$\eta$ is a non-negative random variable with the distribution function
$$
\Pb(\eta\leq x)=1-e^{-\frac{x^2}2},\ \ x\geq 0.
$$

\section{Main Result and Proofs } 
Let $\{\omega_k\}$ be a stationary ergodic sequence.
Consider a sequence of ERWs $\{X^{(n)}(k), k\geq 0\}_{n\geq 1}$  such that for a fixed $\omega=\{\omega_k\}$ the quenched probability satisfies the condition
\be\label{eq_ERW1}
\Pb_\omega\left(X^{(n)}(k+1)-X^{(n)}(k)=1 | \cF^{(n)}_k   \right)= 1- \Pb_\omega\left(X^{(n)}(k+1)-X^{(n)}(k)=-1 | \cF^{(n)}_k   \right)= p^{(n)}_{i,k},
\ee
where 
$$\cF^{(n)}_k:=\sigma(X^{(n)}(0),X^{(n)}(1),\dots,X^{(n)}(k)), \
 i= |\{j\leq k \; : \, X^{(n)}(j)=X^{(n)}(k)\}|,
$$
$$
 p^{(n)}_{i,k}=\frac12 (1+\ve^{(n)}_{k,X^{(n)}(k),i}),\ \ve^{(n)}_{k,x,i}=n^{-1/2}\vf(\frac kn, \frac{x}{\sqrt{n}}, \frac{i}{\sqrt{n}}, \omega_k)
$$
Here $\vf$ is a fixed bounded measurable function.
 
 The annealed, or averaged, probability will be denoted by $\Pb.$
 
Set $X_n(t)=\frac{X^{(n)}([nt])}{\
\sqrt{n}},\ n\in\mbN,\ t\geq 0.$ For convenience we will assume  that $X_n(0)=0.$

Let $D([0,\infty))$ be the space of cadlag functions equipped with the Skorokhod $J_1$ topology, see \cite{Billingsley}. 
\begin{thm}\label{thm1}
Assume that the function $\vf:[0,\infty)\times\mbR\times [0,\infty)\times\mbR\to\mbR$ is bounded and uniformly continuous. Then the
 sequence $\{X_n(\cdot),\ n\geq 1\}$
converges in distribution in $D([0,\infty))$ with respect to almost every quenched measure $P_\omega$, and also with respect to the averaged measure $\Pb$, 
to a solution of the SDE
\be\label{eq_EBM}
Y_t=\int_0^t \bar \vf(s,Y_s,L_Y(s,Y_s))ds +W(t),\ t\geq 0,
\ee
where $\bar \vf(t,x,l)=\E \vf(t,x,l,\omega_k)$, $W$ is a Wiener process.
\end{thm}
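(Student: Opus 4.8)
The plan is to realise each excited walk as an $O(n^{-1/2})$ perturbation of a symmetric simple random walk, to compute the likelihood ratio of the two path laws explicitly, and to recover \eqref{eq_EBM} through a Girsanov-type identification in which the environment is eliminated by ergodic averaging. I work on a fixed horizon $[0,T]$; convergence in $D([0,\infty))$ then follows by letting $T\to\infty$, and since the limit is continuous the $J_1$ statement reduces to uniform convergence on compacts. Fix $\omega$ and let $\Pb^{\mathrm{sw}}_n$ be the law on $\cF^{(n)}_{[nT]}$ of the symmetric walk. With $\Delta_k:=X^{(n)}(k+1)-X^{(n)}(k)$ and $\ve_k:=\ve^{(n)}_{k,X^{(n)}(k),i}=n^{-1/2}\vf(\tfrac kn,\tfrac{X^{(n)}(k)}{\sqrt n},\tfrac{i}{\sqrt n},\omega_k)$, \eqref{eq_ERW1} gives the quenched density
\be\label{density}
\rho_n=\frac{\de\Pb_\omega}{\de\Pb^{\mathrm{sw}}_n}=\prod_{k=0}^{[nT]-1}\bigl(1+\Delta_k\ve_k\bigr).
\ee
Since $\ve_k$ is $\cF^{(n)}_k$-measurable and $\E^{\mathrm{sw}}_n[\Delta_k\mid\cF^{(n)}_k]=0$, successive conditioning yields $\E^{\mathrm{sw}}_n[\rho_n^2]=\E^{\mathrm{sw}}_n\prod_k(1+\ve_k^2)\le\exp(T\|\vf\|_\infty^2)$, so $\{\rho_n\}$ is bounded in $L^2(\Pb^{\mathrm{sw}}_n)$ and hence uniformly integrable.

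\emph{Reference limits and expansion.} Under $\Pb^{\mathrm{sw}}_n$ Donsker's theorem gives $X_n\Rightarrow W$, and the invariance principle for local times \cite{Borodin} upgrades this to joint convergence of $X_n$ with its scaled occupation field, so that $i/\sqrt n$ is well approximated by $L_{X_n}(\tfrac kn,X_n(\tfrac kn))\to L_W$. Because $|\ve_k|\le\|\vf\|_\infty n^{-1/2}$ and $\Delta_k^2=1$, a Taylor expansion of \eqref{density} gives
\be\label{expansion}
\log\rho_n=\Sum_{k=0}^{[nT]-1}\Delta_k\ve_k-\frac12\Sum_{k=0}^{[nT]-1}\ve_k^2+o(1)=:M_n-\tfrac12 V_n+o(1),
\ee
where $M_n$ is a $\Pb^{\mathrm{sw}}_n$-martingale and $V_n=\tfrac1n\Sum_k\vf^2(\tfrac kn,\tfrac{X^{(n)}(k)}{\sqrt n},\tfrac{i}{\sqrt n},\omega_k)$ is its predictable bracket.

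\emph{Ergodic averaging and identification.} The main obstacle is to average the environment out of \eqref{expansion}. Blocking consecutive time indices on a mesoscopic scale, on which the slow variables $(\tfrac kn,X_n,L_{X_n})$ are nearly frozen, and applying Birkhoff's theorem to the stationary ergodic sequence $\{\omega_k\}$ inside each block, I expect to prove, for almost every $\omega$,
\be\label{Vlim}
V_n\to\Int_0^T\ov{\vf^2}\bigl(s,W_s,L_W(s,W_s)\bigr)\de s,\qquad \ov{\vf^2}(s,x,l):=\E\,\vf^2(s,x,l,\omega_0).
\ee
Writing $M_n=A_n+B_n$, with $A_n$ carrying the averaged weight $\bar\vf$ and $B_n$ the centred fluctuations $\vf-\bar\vf$, the discrete It\^o sum $A_n\Rightarrow\Int_0^T\bar\vf(s,W_s,L_W)\de W_s$, while $B_n$ is a martingale whose bracket with $X_n$ tends to $\Int_0^T\E[\vf-\bar\vf]\de s=0$; thus $B_n$ is asymptotically orthogonal to $W$ and, by the martingale functional CLT, converges to $\Int_0^T\sqrt{\Var_\omega\vf}\,\de\wt W$ for an independent Brownian motion $\wt W$. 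Justifying \eqref{Vlim}, this orthogonal-martingale convergence, and their joint convergence with $X_n$ is where the real work lies; note that correlations of $\{\omega_k\}$ across time do not disturb the brackets, since the martingale increments are already orthogonalised by the independent coin flips $\Delta_k$. Everything must be carried out $\omega$-a.s. for the quenched statement and then integrated for the annealed one.

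\emph{Conclusion.} Passing to the Skorokhod representation and using uniform integrability, for bounded continuous $F$ I obtain
\be\label{final}
\E F(X_n)=\E^{\mathrm{sw}}_n\bigl[F(X_n)\rho_n\bigr]\to\E\Bigl[F(W)\exp\Bigl(\Int_0^T\bar\vf\,\de W+\Int_0^T\sqrt{\Var_\omega\vf}\,\de\wt W-\tfrac12\Int_0^T\ov{\vf^2}\,\de s\Bigr)\Bigr].
\ee
Conditioning on $W$ and integrating out the independent $\wt W$ replaces the last two integrals by $-\tfrac12\Int_0^T\bar\vf^2\de s$, because $\ov{\vf^2}=\bar\vf^2+\Var_\omega\vf$; hence the right-hand side of \eqref{final} equals $\E[F(W)\exp(\Int_0^T\bar\vf\,\de W-\tfrac12\Int_0^T\bar\vf^2\de s)]$. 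By Girsanov's theorem (applicable since $\bar\vf$ is bounded, so Novikov's condition is immediate) this is $\E F(Y)$ with $Y$ solving \eqref{eq_EBM}; boundedness of the drift also gives weak existence and uniqueness, so ``a solution'' is unambiguous. The Gikhman--Skorokhod absolute-continuity theorem \cite{GS_abs} then turns \eqref{final} into the asserted weak convergence $X_n\Rightarrow Y$ under both $\Pb_\omega$ for a.e.\ $\omega$ and $\Pb$.
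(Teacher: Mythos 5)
Your proposal follows the same route as the paper: the explicit Radon--Nikodym density of the ERW path law with respect to the symmetric walk, the expansion $\log\rho_n=M_n-\tfrac12V_n+o(1)$, Borodin's invariance principle for the walk and its local times realised almost surely on a single probability space (Theorem \ref{thm_Borodin}), the Gikhman--Skorokhod lemma (Lemma \ref{lemGS}), and Girsanov's theorem to identify the limit law with that of \eqref{eq_EBM}; your blocking arguments for the bracket and for the discrete stochastic integral are exactly the paper's Lemmas \ref{lem3} and \ref{lem2}. Two things are genuinely different. First, you get uniform integrability from $\E\rho_n^2=\E\prod_k(1+\ve_k^2)\le e^{T\|\vf\|_\infty^2}$, whereas the paper infers it from $\E\rho_n=1$ together with $\E p=1$ (supplied by Novikov); your $L^2$ bound is a modest simplification, since $\E p=1$ then comes out as a conclusion rather than going in as a hypothesis. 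Second, and more substantively, your handling of the environment is more complete than the paper's. The paper settles the general case by asserting that the limits in \eqref{eq7.1}, \eqref{eq378} and \eqref{eq411} persist when the summand acquires the extra argument $\omega_k$, by ergodic averaging (Lemma \ref{lem3}); but that lemma only controls Riemann-sum--type terms (the bracket and the block errors), and the martingale sum itself does \emph{not} converge to $\int_0^T\bar\vf\,\de W$: the centred part $B_n$ in your decomposition $M_n=A_n+B_n$ has bracket converging to $\int_0^T\Var_\omega\vf\,\de s$, which is nonzero in general, so an extra martingale orthogonal to $W$ survives in $\log\rho_n$. You resolve this correctly: $B_n$ converges to $\int_0^{\cdot}\sqrt{\Var_\omega\vf}\,\de\wt W$ with $\wt W$ independent of $W$, and the conditional expectation in Lemma \ref{lemGS} --- which the paper remarks is unnecessary when $\vf$ does not depend on $\omega$, but never actually computes when it does --- is evaluated via the identity $\ov{\vf^2}=\bar\vf^2+\Var_\omega\vf$, which is exactly what reconciles $\E p=1$ with the averaged drift $\bar\vf$ in \eqref{eq_EBM}. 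The flip side is that your deferred steps (``where the real work lies'') are real: the a.s.\ ergodic limits, the discrete stochastic-integral convergence, and a quenched joint martingale FCLT for the pair $(S_n,B_n)$ identifying $\wt W$ as independent of $W$ must all be proved for a.e.\ fixed $\omega$; the first two are precisely the paper's Lemmas \ref{lem3} and \ref{lem2}, while the third has no counterpart in the paper and would have to be supplied to make either argument airtight.
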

\begin{remk}
There is a unique weak solution to \eqref{eq_EBM} by Girsanov's theorem, see \cite{Norris}.
\end{remk}
\begin{proof}
In order to explain the idea of the proof and to avoid cumbersome calculations, at first we prove the theorem for $\vf$ that depends only on the first three of its coordinates, i.e.,
$\vf(t,x,l,\omega)=\vf(t,x,l).$ Then we explain how to handle the general case.

Denote by $\{S(k), k\geq 0\}$ a symmetric random walk, $S(k)= \xi_1+\dots+\xi_k,\ S(0)
=0$, where $\{\xi_i\}$ are i.i.d., $\Pb(\xi_i=\pm1)=1/2.$

 Let  $P_{X^{(n)}}$ be a distribution of $\{X^{(n)}(k)\}_{k=0}^n,$ $P_{S^{(n)}}$ be a distribution of $\{S(k)\}_{k=0}^n,$

Then $P_{X^{(n)}}\ll P_{S^{(n)}} $ and the Radon-Nikodym density equals:
\begin{multline}
\forall  i_0=0,i_1,...,i_n\in\mbZ,\ |i_{k+1}-i_k|,\\
\frac{d P_{X^{(n)}}}{d  P_{S^{(n)}}}(i_0,i_1,...,i_n)=
 \underset{k=0}{\overset{n-1}\Pi}\frac{\frac12 (1+\ve^{(n)}_{k})}{\frac 12}= \underset{k=0}{\overset{n-1}\Pi}{(1+\ve^{(n)}_{k})} =
\end{multline}
$$
\underset{k=0}{\overset{n-1}\Pi} \left(1+\frac 1{\sqrt n} \vf(\frac kn, \frac{i_k}{\sqrt{n}}, \frac{l(k, i_k )}{\sqrt{n}})
\1_{i_{k+1}-i_k=1}- \frac 1{\sqrt n} \vf(\frac kn, \frac{i_k}{\sqrt{n}}, \frac{l(k, i_k )}{\sqrt{n}})
 \1_{i_{k+1}-i_k=-1}\right)=
$$
$$
\underset{k=0}{\overset{n-1}\Pi} \left(1+\frac 1{\sqrt n} \vf(\frac kn, \frac{i_k}{\sqrt{n}}, \frac{l(k, i_k )}{\sqrt{n}})
(i_{k+1}-i_k)\right),
$$
where $l(k, i)= |\{ j\leq k\, : \, X^{(n)}(j)=i\} |$

Hence
$$
\frac{d P_{X^{(n)}}}{d  P_{S^{(n)}}}(S(0), S(1), ...,S(n))=
$$ 
\be\label{eq224}
\underset{k=0}{\overset{n-1}\Pi} \left(1+\frac 1{\sqrt n} \vf(\frac kn, \frac{S(k)}{\sqrt{n}}, \frac{\nu (k, S(k) )}{\sqrt{n}})
\xi_{k+1} \right),
\ee
where $\nu (k, i)= |\{ j\leq k\, : \, S(j)=i\} |$.

\begin{lem}\label{lemGS}
Let $\{X^n, n\geq 1\}$ and  $\{Y^n, n\geq 1\}$ be sequences of random elements given on the same probability space and taking values
in a complete separable metric space $E$.

Assume that 

1)   $Y_n\overset{{\rm P}}{\to} Y_0, n\to \infty$;

2) for each $n\geq 1$ we have the absolute continuity of the distributions
$$
\Pb_{X_n}\ll \Pb_{Y_n};
$$
3) the sequence $\{\rho_n(Y_n), n\geq 1\}$ converges in probability to a random variable $p,$ 
 where $\rho_n=\frac{d\Pb_{X_n}}{d\Pb_{Y_n}}$ is the Radon-Nikodym density;

4)
 $\E p=1.$
 
Then the sequence of distributions $\{P_{X_n}\}$  converges weakly as $n\to\infty$ to the probability measure
 $\E(p\, |\, Y_0=y) P_{Y_0} (dy)$.
\end{lem}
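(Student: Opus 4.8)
The plan is to verify the weak convergence by testing against bounded continuous functions and to reduce everything, via the Radon--Nikodym densities, to a single sequence of scalar random variables on the common probability space. Recall that $\Pb_{X_n}\Rightarrow Q$ is equivalent to $\int_E f\,d\Pb_{X_n}\to\int_E f\,dQ$ for every bounded continuous $f:E\to\mbR$, so I fix such an $f$ and write $M:=\sup|f|$. The first step is to pin down the limit measure $Q=\E(p\mid Y_0=y)\,P_{Y_0}(\de y)$ from the statement. Introduce the tilted probability measure $\de\wt\Pb:=p\,\de\Pb$, which is indeed a probability measure by assumption 4). Its pushforward under $Y_0$ is exactly $Q$, because for measurable $A$ we have $\wt\Pb(Y_0\in A)=\E[p\,\1_{Y_0\in A}]=\int_A\E(p\mid Y_0=y)\,P_{Y_0}(\de y)=Q(A)$. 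Hence it suffices to prove $\int_E f\,d\Pb_{X_n}\to\E[p\,f(Y_0)]$.

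The second step is the change of variables. Using the absolute continuity 2) and the fact that $\Pb_{Y_n}$ is the law of $Y_n$,
\[
\int_E f\,d\Pb_{X_n}=\int_E f(y)\,\rho_n(y)\,\Pb_{Y_n}(\de y)=\E\big[f(Y_n)\,\rho_n(Y_n)\big].
\]
Everything is now phrased through the single nonnegative sequence $\zeta_n:=\rho_n(Y_n)$. I would next check that the integrands converge in probability: since $f$ is continuous and $Y_n\toP Y_0$ by 1), the continuous mapping theorem gives $f(Y_n)\toP f(Y_0)$; combining this with the hypothesis $\zeta_n\toP p$ of 3) through the continuity of multiplication yields $f(Y_n)\,\zeta_n\toP f(Y_0)\,p$.

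The main obstacle is upgrading this convergence in probability of $f(Y_n)\zeta_n$ to convergence of the expectations, since convergence in probability by itself does not transmit to means. This is precisely where assumption 4) is used. The variables $\zeta_n$ are nonnegative with matching total mass $\E\zeta_n=\int_E\rho_n\,d\Pb_{Y_n}=\Pb_{X_n}(E)=1=\E p$, so I would invoke a Scheffé-type argument to get uniform integrability. The clean route is the identity $|\zeta_n-p|=\zeta_n+p-2(\zeta_n\wedge p)$: the minimum $\zeta_n\wedge p$ converges to $p$ in probability (continuity of $\wedge$) and is dominated by the integrable variable $p$, so the dominated convergence theorem for convergence in probability gives $\E(\zeta_n\wedge p)\to\E p$, whence $\E|\zeta_n-p|=\E\zeta_n+\E p-2\,\E(\zeta_n\wedge p)\to 0$. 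In particular $\{\zeta_n\}$ is uniformly integrable.

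Finally, since $|f(Y_n)\zeta_n|\le M\zeta_n$, the family $\{f(Y_n)\zeta_n\}$ inherits uniform integrability from $\{\zeta_n\}$; together with the convergence in probability $f(Y_n)\zeta_n\toP f(Y_0)\,p$ established above, this yields $\E[f(Y_n)\zeta_n]\to\E[f(Y_0)\,p]$. Tracing back through the change of variables and the identification of the limit measure, this is exactly $\int_E f\,d\Pb_{X_n}\to\int_E f\,dQ$. As $f$ was an arbitrary bounded continuous function, the weak convergence $\Pb_{X_n}\Rightarrow Q$ follows.
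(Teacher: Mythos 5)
Your proof is correct and follows essentially the same route as the paper: testing against bounded continuous $f$, the change of variables $\E f(X_n)=\E[f(Y_n)\rho_n(Y_n)]$, uniform integrability of $\{\rho_n(Y_n)\}$ extracted from $\E \rho_n(Y_n)=1=\E p$, and identification of the limit via the tower property. The only difference is one of detail: the paper merely asserts that $\E p=1$ yields uniform integrability, while you spell out the Scheff\'e-type argument ($\E|\zeta_n-p|=\E\zeta_n+\E p-2\E(\zeta_n\wedge p)\to 0$) that justifies it.
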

   The idea of the proof of the lemma is due to Gikhman and Skorokhod \cite{GS_abs}. Since   $\{\rho_n(Y_n), n\geq 1\}$  are non-negative random variables $\E\rho_n(Y_n)=1,$  the condition   $\E p=1$ yields the uniform integrability of $\{\rho_n(Y_n), n\geq 1\}.$ 
The proof of Lemma \ref{lemGS} follows from the next calculations
$$
\forall f\in C_b(E):\ \ 
 \lin \int_E f dP_{X_n}=\lin \E f(X_n)= \lin \E f(Y_n) \rho_n(Y_n)=
   \E f(Y_0) p = 
$$
\be\label{eq266}
\E\left( f(Y_0) \, \E(p\, | \, Y_0)\right)=
\int_E f(y) \E(p \,|\, Y_0=y) P_{Y_0} (dy).
\ee

Let us continue the proof of  Theorem \ref{thm1}. We will prove convergence in distribution $\frac{X^{(n)}([n\cdot])}{\sqrt n}\Rightarrow Y$ in   $D([0,1])$ only.

 We need the following invariance principle for RWs and the local times of RWs.
 \begin{thm}\label{thm_Borodin}
 There is  a probability space and  copies   $\{S^{(n)}(k), k=0,...,n\} \overset{d}= \{S(k), k=0,...,n\}$ defined on this space, and   a Wiener process
 $W(t), t\in[0,1],$   such that
 \be\label{eq6.0}
 \lim_{n\to\infty}\sup_{t\in[0,1]}|\frac{S^{(n)}([nt])}{\sqrt n}-W(t)|=0,
 \ee
\be\label{eq6.1}
 \lim_{n\to\infty}\sup_{t\in[0,1]}\sup_{x\in\mbR}|\frac{\nu^{(n)}([nt], [x\sqrt n])}{\sqrt n}-L_W(t,x)|=0,
 \ee 
 with probability 1, where  $\nu^{(n)}(k, i)= |\{ j\leq k\, : \, S^{(n)}(j)=i\} |$, $L_W$ is the local time of the Wiener process (we consider 
 a modification of $L_W$ that is continuous in $t,x$).
\end{thm}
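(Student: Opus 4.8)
The plan is to realise all the walks and the limit process on a single probability space by a Skorokhod embedding into one Brownian motion, so that the Wiener process $W$ is literally that Brownian motion. I start from a standard Brownian motion $B$ on $[0,\infty)$ and, for each $n$, define the stopping times $T^{(n)}_0=0$ and $T^{(n)}_k=\inf\{t>T^{(n)}_{k-1}:|B(t)-B(T^{(n)}_{k-1})|=1/\sqrt n\}$. Since Brownian motion leaves a symmetric interval through either endpoint with equal probability and with independent increments, $S^{(n)}(k):=\sqrt n\,B(T^{(n)}_k)$ is a copy of $\{S(k)\}_{k=0}^n$ for every $n$, and $S^{(n)}([nt])/\sqrt n=B(T^{(n)}_{[nt]})$. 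The one–dimensional exit times $\tau^{(n)}_k:=T^{(n)}_k-T^{(n)}_{k-1}$ are i.i.d.\ with mean $1/n$, so a maximal inequality together with the Borel--Cantelli lemma gives $\Sup_{t\in[0,1]}|T^{(n)}_{[nt]}-t|\toas 0$. Setting $W:=B$ and using the uniform continuity of $B$ on $[0,1+\delta]$, this time–change estimate yields \eqref{eq6.0} at once.

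For \eqref{eq6.1} I would pass from visit counts of the embedded walk to crossing counts of $B$. Between two consecutive embedding times the path of $B$ moves by exactly $\pm 1/\sqrt n$, so the steps of $S^{(n)}$ are precisely the successive crossings of the grid $\{i/\sqrt n\}$ by $B$; in particular the number of upcrossings of $[i/\sqrt n,(i+1)/\sqrt n]$ by $B$ equals the number of steps of $S^{(n)}$ from $i$ to $i+1$. Consequently $\nu^{(n)}([nt],[x\sqrt n])$ agrees, up to an $O(1)$ boundary term, with the number of $(1/\sqrt n)$–crossings of the level $x=i/\sqrt n$ by $B$ on $[0,T^{(n)}_{[nt]}]$. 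The classical downcrossing representation of local time (L\'evy), in its strong form, states that $\ve$ times the number of $\ve$–crossings of a level on $[0,t]$ tends to $L_B(t,x)$; applying it with $\ve=1/\sqrt n$, replacing the random horizon $T^{(n)}_{[nt]}$ by $t$ (legitimate since $\Sup_t|T^{(n)}_{[nt]}-t|\toas 0$ and $L_W$ is jointly continuous in $t$), and dividing by $\sqrt n$, gives the pointwise convergence $\nu^{(n)}([nt],[x\sqrt n])/\sqrt n\toas L_W(t,x)$.

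The hard part is the \emph{simultaneous} uniformity in $t$ and $x$ demanded by \eqref{eq6.1}; the pointwise, and even the $t$–uniform, statement above is comparatively soft. To upgrade to uniformity in $x\in\mbR$ I would (i) use the H\"older continuity of $L_W(\cdot,x)$ in the space variable to reduce the supremum over $x$ to a maximum over a $(1/\sqrt n)$–grid of levels; (ii) bound, uniformly over that grid, the overshoot and boundary discrepancies between the discrete visit counts and the continuous crossing counts, which is exactly where quantitative strong–approximation rates of KMT type and Cs\"org\H{o}--R\'ev\'esz estimates on the increments of local time are needed; and (iii) patch the grid estimates into a global bound by interpolation, again through the modulus of continuity of both the discrete and the continuous local time fields. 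Controlling these three ingredients uniformly is the technical core carried out in \cite{Borodin}, and it is the step I expect to be the most delicate.
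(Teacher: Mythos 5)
First, note what you are being compared against: the paper does not prove Theorem \ref{thm_Borodin} at all --- it is quoted as a known result, namely Borodin's invariance principle for local times of recurrent random walks \cite{Borodin}. So the standard your proposal has to meet is that of a self-contained proof. For \eqref{eq6.0} you essentially meet it: embedding every walk $S^{(n)}$ into one Brownian motion $B$ via the exit times $T^{(n)}_k$, controlling $\sup_{t\in[0,1]}|T^{(n)}_{[nt]}-t|$ by moment bounds plus Borel--Cantelli, and using uniform continuity of $B$ is a complete and correct argument, and it produces all the $S^{(n)}$ together with the single process $W=B$ on one probability space, exactly as the statement requires. Your identification of walk steps with complete crossings of the grid $\{i/\sqrt n\}$ by $B$ is also exact (between consecutive embedding times $B$ stays strictly between the two neighbouring grid levels), and the resulting pointwise convergence $\nu^{(n)}([nt],[x\sqrt n])/\sqrt n\to L_W(t,x)$, with the correct constant, is sound.

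The genuine gap is exactly where you flag it, and it cannot be closed the way you close it: the simultaneous uniformity in $(t,x)$ in \eqref{eq6.1}. The statement you are proving \emph{is} Borodin's theorem, so writing that the uniform control of the crossing-count errors ``is the technical core carried out in \cite{Borodin}'' reduces the theorem to itself; your steps (i)--(iii) are a plan, not an argument, since no estimate is actually established there. Moreover, the tools you name are partly off target: in the embedding construction there are no overshoots and no KMT-type strong approximation is needed. What your route actually requires is (a) an almost-sure version of L\'evy's downcrossing theorem holding \emph{uniformly in the level} $x$ (a Perkins or Chacon--Le Jan--Perkins--Taylor type result, which is a genuine theorem and appears neither in your sketch nor in the paper's bibliography), and (b) removal of the random horizon uniformly, i.e. $\sup_{t\in[0,1]}\sup_{x\in\mbR}|L_B(T^{(n)}_{[nt]},x)-L_B(t,x)|\to 0$ a.s., which does follow from your bound on $\sup_{t}|T^{(n)}_{[nt]}-t|$ combined with joint continuity of $L_B$ and compactness of the range of $B$ on a fixed time interval. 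If you cite or prove (a), your embedding route becomes a legitimate alternative derivation of the theorem; as written, the key estimate is assumed rather than proved.
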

Let us apply Lemma \ref{lemGS}, where  
$$
X_n=X_n(t)=\frac{X^{(n)}([nt ])}{\sqrt n},\ \   Y_n= S_n(t)=\frac{S^{(n)}([nt])}{\sqrt n},\, t\in[0,1].
$$
 It follows from \eqref{eq224} 
that 
$$
\ln\frac{d P_{X_n}}{d  P_{S_n}}(S_n)=
$$ 
$$
\underset{k=0}{\overset{n-1}\sum} \ln\left(1+\frac 1{\sqrt n} \vf(\frac kn, \frac{S^{(n)}(k)}{\sqrt{n}}, \frac{\nu^{(n)}(k, S^{(n)}(k) )}{\sqrt{n}})
\xi^{(n)}_{k+1} \right)=
$$
$$
\frac 1{\sqrt n}\underset{k=0}{\overset{n-1}\sum}   \vf(\frac kn, \frac{S^{(n)}(k)}{\sqrt{n}}, \frac{\nu^{(n)}(k, S^{(n)}(k) )}{\sqrt{n}})
\xi^{(n)}_{k+1} - \frac 1{2n} \underset{k=0}{\overset{n-1}\sum}  \vf^2(\frac kn, \frac{S^{(n)}(k)}{\sqrt{n}}, \frac{\nu^{(n)}(k, S^{(n)}(k) )}{\sqrt{n}})   +
$$
$$
\frac \theta{3n^{3/2}}\underset{k=0}{\overset{n-1}\sum}   |\vf^3(\frac kn, \frac{S(k)}{\sqrt{n}}, \frac{\nu^{(n)}(k, S^{(n)}(k) )}{\sqrt{n}})|=I_1^n+I_2^n+I_3^n,
$$
where $\theta\in(-1,1).$
Since $\vf$ is bounded, $\lim_{n\to\infty}I_3^n=0$ for all $\omega.$

By  \eqref{eq6.0}, \eqref{eq6.1},  continuity of $L_W(t,x)$ in both of its arguments, and dominated convergence theorem we have convergence
\be\label{eq7.1}
\lim_{n\to\infty}\frac 1{2n} \underset{k=0}{\overset{n-1}\sum}  \vf^2(\frac kn, \frac{S^{(n)}(k)}{\sqrt{n}}, \frac{\nu^{(n)}(k, S^{(n)}(k) )}{\sqrt{n}})= 
\ee
$$
\lim_{n\to\infty}\frac 1{2n} \int_0^1\vf^2(\frac{[nt]}{n}, \frac{S^{(n)}(\frac{[nt]}{n})}{\sqrt{n}}, \frac{\nu^{(n)}([nt], S^{(n)}([nt]) )}{\sqrt{n}}) dt=
$$
$$
  \frac 12 \int_0^1\vf^2( t, W(t), L_W(t, W(t) ) dt.
$$
\begin{lem}\label{lem2}
We have convergence in probability
$$
 \frac 1{\sqrt n}\underset{k=0}{\overset{n-1}\sum}   \vf(\frac kn, \frac{S^{(n)}(k)}{\sqrt{n}}, \frac{\nu^{(n)}(k, S^{(n)}(k) )}{\sqrt{n}})
\xi^{(n)}_{k+1} \toP \int_0^1 \vf ( t, W(t), L_W(t, W(t) ) dW(t), \ n\to\infty.
$$
\end{lem}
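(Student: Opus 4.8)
The plan is to recognise the left-hand side as a discrete stochastic integral against the walk, to compare it blockwise with the It\^o integral of the limiting integrand, and to control the comparison error using the martingale orthogonality of the increments $\xi^{(n)}_{k+1}$ together with the uniform convergences of Theorem \ref{thm_Borodin}. Write
\[
M_n:=\frac1{\sqrt n}\Sum_{k=0}^{n-1}\psi_k^{(n)}\,\xi^{(n)}_{k+1},\qquad
\psi_k^{(n)}:=\vf\Bigl(\tfrac kn,\tfrac{S^{(n)}(k)}{\sqrt n},\tfrac{\nu^{(n)}(k,S^{(n)}(k))}{\sqrt n}\Bigr),
\]
and set $\phi(t):=\vf(t,W(t),L_W(t,W(t)))$ and $J:=\Int_0^1\phi(t)\,dW(t)$. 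Let $\Phi_n(s):=\psi^{(n)}_{[ns]}$ be the piecewise-constant integrand. The first thing I would record, exactly as in the derivation of \eqref{eq7.1}, is that all three arguments of $\vf$ converge uniformly on $[0,1]$ a.s.: the time argument trivially, the space argument by \eqref{eq6.0}, and the local-time argument by \eqref{eq6.1} composed with \eqref{eq6.0} and the continuity of $L_W$. Since $\vf$ is uniformly continuous and the arguments stay in a compact set, this yields $\delta_n:=\Sup_{s\in[0,1]}|\Phi_n(s)-\phi(s)|\to0$ a.s., with $\delta_n\le2\|\vf\|_\infty$; note also that $\phi$ is continuous and bounded.

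Fix $m\ge1$ and the grid $t_j=j/m$, and let $j(k)=\lfloor km/n\rfloor$, so that $\kappa_n(k):=[nt_{j(k)}]\le k$ and $0\le \tfrac kn-t_{j(k)}<\tfrac1m$. I would introduce two block approximations: the It\^o block sum $J^{(m)}:=\Sum_{j=0}^{m-1}\phi(t_j)\bigl(W(t_{j+1})-W(t_j)\bigr)=\Int_0^1\phi^{(m)}\,dW$, where $\phi^{(m)}$ is the adapted left-endpoint step function, and the frozen-coefficient walk sum $M_n^{(m)}:=\frac1{\sqrt n}\Sum_{k=0}^{n-1}\psi^{(n)}_{\kappa_n(k)}\,\xi^{(n)}_{k+1}$. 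Then I would run the triangle inequality $|M_n-J|\le|M_n-M_n^{(m)}|+|M_n^{(m)}-J^{(m)}|+|J^{(m)}-J|$, sending $n\to\infty$ first and $m\to\infty$ afterwards. The outer two terms are routine. By It\^o's isometry $\E|J^{(m)}-J|^2=\E\Int_0^1|\phi^{(m)}-\phi|^2\,dt\to0$ as $m\to\infty$ by continuity and boundedness of $\phi$ and dominated convergence, uniformly in $n$. For fixed $m$, grouping the terms of $M_n^{(m)}$ by block gives a finite sum in which the constant coefficient $\psi^{(n)}_{[nt_j]}\to\phi(t_j)$ and the summed increments tend to $W(t_{j+1})-W(t_j)$ a.s. by Theorem \ref{thm_Borodin}, so $M_n^{(m)}\to J^{(m)}$ a.s. as $n\to\infty$.

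The main obstacle, and the crux of the lemma, is the first term. The key structural point is that
\[
M_n-M_n^{(m)}=\frac1{\sqrt n}\Sum_{k=0}^{n-1}\bigl(\psi_k^{(n)}-\psi^{(n)}_{\kappa_n(k)}\bigr)\,\xi^{(n)}_{k+1}
\]
is a sum of martingale differences for the walk filtration $\cF_k^{(n)}$: the coefficient is $\cF_k^{(n)}$-measurable because $\kappa_n(k)\le k$, while $\E[\xi^{(n)}_{k+1}\mid\cF_k^{(n)}]=0$ and $\E[(\xi^{(n)}_{k+1})^2\mid\cF_k^{(n)}]=1$. Hence the cross terms vanish and
\[
\E|M_n-M_n^{(m)}|^2=\frac1n\Sum_{k=0}^{n-1}\E\bigl(\psi_k^{(n)}-\psi^{(n)}_{\kappa_n(k)}\bigr)^2 .
\]
Now I would estimate the within-block oscillation by $|\psi_k^{(n)}-\psi^{(n)}_{\kappa_n(k)}|=|\Phi_n(\tfrac kn)-\Phi_n(t_{j(k)})|\le 2\delta_n+\modf_\phi(1/m)$, where $\modf_\phi$ is the (random) modulus of continuity of $\phi$ and $|\tfrac kn-t_{j(k)}|<1/m$. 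Since $\delta_n\to0$ a.s. and $2\delta_n+\modf_\phi(1/m)\le6\|\vf\|_\infty$ is bounded, dominated convergence gives $\limsup_n\E|M_n-M_n^{(m)}|^2\le\E\,\modf_\phi(1/m)^2$, and this tends to $0$ as $m\to\infty$ because $\phi$ is a.s. uniformly continuous on $[0,1]$ with $\modf_\phi(1/m)\le2\|\vf\|_\infty$. Combining the three estimates through Chebyshev's inequality, letting $n\to\infty$ and then $m\to\infty$, yields $M_n\toP J$, which is the assertion of Lemma \ref{lem2}. The delicate point throughout is that the limiting integrand $\phi$ is itself a random functional of $W$, so one cannot use its adaptedness with respect to the walk filtration; the argument sidesteps this by freezing coefficients at left endpoints that are $\cF_k^{(n)}$-measurable and by carrying the randomness of $\modf_\phi$ through bounded convergence.
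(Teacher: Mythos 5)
Your proof is correct and follows essentially the same route as the paper's: Skorokhod's blockwise freezing of the integrand at left endpoints, martingale orthogonality giving the exact $L^2$ identity for the oscillation term, a.s.\ convergence of the frozen block sums via Theorem \ref{thm_Borodin}, It\^o isometry for the tail, and the iterated limit $n\to\infty$ then $m\to\infty$. The only (cosmetic) difference is that you bound the within-block oscillation directly by $2\delta_n+\omega_\phi(1/m)$ and invoke dominated convergence, whereas the paper computes $\lim_n\E(I_1^{n,m})^2$ exactly and identifies it with $\lim_n\E(I_4^{n,m})^2$ before sending $m\to\infty$; both hinge on the same uniform convergences and the continuity of $L_W$.
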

\begin{proof}
We use idea of Skorokhod \cite[Chapter 3, \S3]{SkorIssl}. 
Let $m\in\mbN$ be fixed. Then
$$
 |\frac 1{\sqrt n}\underset{k=0}{\overset{n-1}\sum}   \vf(\frac kn, \frac{S^{(n)}(k)}{\sqrt{n}}, \frac{\nu^{(n)}(k, S^{(n)}(k) )}{\sqrt{n}})
\xi^{(n)}_{k+1}- \int_0^1 \vf ( t, W(t), L_W(t, W(t) ) dW(t)|\leq
$$
\begin{multline*}
\left|\sum_{j=0}^{m-1}\sum_{[\frac{jn}{m}]\leq k<[\frac{(j+1)n}{m}]}  
 \left(\vf(\frac kn, \frac{S^{(n)}(k)}{\sqrt{n}}, \frac{\nu^{(n)}(k, S^{(n)}(k) )}{\sqrt{n}})
 - \right.\right.
\\
\left.\left.  \vf(\frac{[{jn}/{m}]}n, \frac{S^{(n)}([\frac{jn}{m}])}{\sqrt{n}}, \frac{\nu^{(n)}([\frac{jn}{m}], S^{(n)}([\frac{jn}{m}]) )}{\sqrt{n}})
  \right)\frac{\xi^n_{k+1}}{\sqrt{n}}\right|+
\end{multline*}
\begin{multline*}
\left|\sum_{j=0}^{m-1}  \left(\vf(\frac{[{jn}/{m}]}n, \frac{S^{(n)}([\frac{jn}{m}])}{\sqrt{n}}, \frac{\nu^{(n)}([\frac{jn}{m}], S^{(n)}([\frac{jn}{m}]) )}{\sqrt{n}})
  \right) \right.\\
\left.  \left( \Big(\sum_{[\frac{jn}{m}]\leq k<[\frac{(j+1)n}{m}]}\frac{\xi^n_{k+1}}{\sqrt{n}}\Big)
- \Big(W({\frac{[{(j+1)n}/{m}]}n})-W({\frac{[{jn}/{m}]}n})\Big)\right)\right|+  
\end{multline*}
\begin{multline*} 
 \left|\sum_{j=0}^{m-1} \Big(\vf(\frac{[{jn}/{m}]}n, \frac{S^{(n)}([\frac{jn}{m}])}{\sqrt{n}}, \frac{\nu^{(n)}([\frac{jn}{m}], S^{(n)}([\frac{jn}{m}]) 
)}{\sqrt{n}})-
\right.
\\
\vf ( \frac{[{jn}/{m}]}n, W(\frac{[{jn}/{m}]}n), L_W(\frac{[{jn}/{m}]}n, W(\frac{[{jn}/{m}]}n) )\Big)
\\
\left.  
\Big(W({\frac{[{(j+1)n}/{m}]}n})-W({\frac{[{jn}/{m}]}n})\Big)\right|+
\end{multline*}
\begin{multline*}
  \Big|\sum_{j=0}^{m-1} \int_{\frac{[{jn}/{m}]}n}^{\frac{[{(j+1)n}/{m}]}n}
 \Big(\vf ( \frac{[{jn}/{m}]}n, W(\frac{[{jn}/{m}]}n), L_W(\frac{[{jn}/{m}]}n, W(\frac{[{jn}/{m}]}n) ) -   
  \\
 \vf ( t, W(t), L_W(t, W(t) )   \Big)dW(t)\Big|
  =
\end{multline*}
$$
=I^{n,m}_1+I^{n,m}_2+I^{n,m}_3+ I^{n,m}_4.
$$
It follows from Theorem \ref{thm_Borodin}, Lebesgue dominated convergence theorem, and continuity of $L_W(t,x)$ in both of its arguments
that 
\be\label{eq378}
\lin \E(I_1^{n,m})^2= 
\ee
\begin{multline*}
\lin \frac1n\, \E  
 \sum_{j=0}^{m-1}\sum_{[\frac{jn}{m}]\leq k<[\frac{(j+1)n}{m}]}  
 \left(\vf(\frac kn, \frac{S^{(n)}(k)}{\sqrt{n}}, \frac{\nu^{(n)}(k, S^{(n)}(k) )}{\sqrt{n}})
\right. 
\\
 \left.
 - \vf(\frac{[{jn}/{m}]}n, \frac{S^{(n)}([\frac{jn}{m}])}{\sqrt{n}}, \frac{\nu^{(n)}([\frac{jn}{m}], S^{(n)}([\frac{jn}{m}]) )}{\sqrt{n}})
  \right)^2=
  \end{multline*}
$$
  \E  
 \sum_{j=0}^{m-1}\int_{\frac{j}{m}}^{\frac{(j+1)}{m} }  
 \left(\vf(t, W(t), L_W(t, W(t)  ) )
 - \vf(\frac{j}{m} ,W(\frac{j}{m}), L_W(\frac{j}{m},W(\frac{j}{m})) ) 
  \right)^2dt.
  $$
  $$
=\lin \E(I^{m,n}_4)^2
$$
It follows from Theorem \ref{thm_Borodin} that  $\lin  I^{n,m}_2 =\lin   I^{n,m}_3 =0$ a.s.  for each fixed $m$. So, by dominated convergence theorem
$$
\forall m\geq 1\ \ \lin \E(I^{n,m}_2)^2=\lin \E(I^{n,m}_3)^2=0.
$$ 
So for any $m\geq 1$
$$
\limsup_{n\to\infty}\E \left(\sum_{k=0}^{n-1}  
 \frac 1{\sqrt n}\underset{k=0}{\overset{n-1}\sum}   \vf(\frac kn, \frac{S^{(n)}(k)}{\sqrt{n}}, \frac{\nu^{(n)}(k, S^{(n)}(k) )}{\sqrt{n}})
\xi^{(n)}_{k+1}- \int_0^1 \vf ( t, W(t), L_W(t, W(t) ) dW(t)\right)^2\leq
$$
\be\label{eq411}
  4\E  
 \sum_{j=0}^{m-1}\int_{\frac{j}{m}}^{\frac{(j+1)}{m} }  
 \left(\vf(t, W(t), L_W(t, W(t)  ) )
 - \vf(\frac{j}{m} ,W(\frac{j}{m}), L_W(\frac{j}{m},W(\frac{j}{m})) ) 
  \right)^2dt.
  \ee
Letting $m\to\infty$ we    
 complete  the proof of the lemma.
 
\end{proof}
Since $\vf$ is bounded, 
\be\label{eq_int}
\E \exp\{ \int_0^1 \vf ( t, W(t), L_W(t, W(t) ) dW(t) - \frac 12 \int_0^1\vf^2( t, W(t), L_W(t, W(t) ) dt\}=1
\ee 
by Novikov's theorem.

Therefore, by Lemma \ref{lemGS} we have convergence $X_n\Rightarrow Y,$ where the distribution of $Y$ has a density
$\exp\{ \int_0^1 \vf ( t, W(t), L_W(t, W(t) ) dW(t) - \frac 12 \int_0^1\vf^2( t, W(t), L_W(t, W(t) ) dt\}$ with respect to the
 Wiener measure. Note that the local time and the integrals are measurable functions with respect to the $\sigma$-algebra 
 generated by $W.$ So there was no necessity  for calculations of the  conditional  expectation in Lemma \ref{lemGS}.
By Girsanov's theorem, the process $Y$ is a weak solution to the equation \eqref{eq_EBM}. The theorem is proved if $\vf(t,x,l,\omega)=\vf(t,x,l).$

Consider the general case.

We prove the theorem if we  find the corresponding limits in \eqref{eq7.1},
 \eqref{eq378}, and \eqref{eq411}, where the general summand is
replaced by 
$$
\vf(\frac kn, \frac{S^{(n)}(k)}{\sqrt{n}}, \frac{\nu^{(n)}(k, S^{(n)}(k) )}{\sqrt{n}}, \omega_k),
$$
and the sequence $\{\omega_k, k\geq 0\}$ is independent of $\{S^{(n)}(k) \}.$

 The next statement completes the proof of the theorem.
 \begin{lem}\label{lem3}
 Let $f:\mbR^{d+1}\to\mbR$  be a uniformly continuous and bounded  function, $\{\eta_k, k\geq 0\}_{n\geq 1}$ be a stationary ergodic sequence, $\{\xi_n(t), \ t\geq 0\}_{n\geq 1}$ be a sequence of continuous $\mbR^d$-valued processes that   locally uniformly converge to a process $\xi(t), t\geq0,$  almost surely,
 $$
 \forall T>0\ \ \ \lin\sup_{t\in[0,T]}|\xi_n(t)-\xi(t)|=0\ \ \mbox{a.s.}
 $$
 Then we have the following almost sure convergence 
 $$
 \forall T>0\ \ \  \frac 1n\sum_{k\leq {nT}}f(\xi_n(\frac kn), \eta_k) \toas  \int_0^T\bar f(\xi(t))dt, \ n\to\infty,
 $$
where $\bar f(x)=\E f(x,\eta_k).$ 
\end{lem}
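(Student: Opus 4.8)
The plan is to combine Birkhoff's ergodic theorem for $\{\eta_k\}$ with a Riemann-sum approximation, using a single partition of $[0,T]$ to decouple the averaging in $k$ from the spatial variation of $\xi$. Fix $m\in\mbN$ and split $\{0,\dots,[nT]\}$ into blocks $B_j=\{k:[jnT/m]\le k<[(j+1)nT/m]\}$, $j=0,\dots,m-1$. On each block I freeze the slow variable, replacing $\xi_n(k/n)$ by the single value $\xi(jT/m)$. This gives the decomposition
\[
\frac1n\Sum_{k\le nT} f\big(\xi_n(\tfrac kn),\eta_k\big)-\Int_0^T\bar f(\xi(t))\,\de t = A_n^m+B_n^m+C^m,
\]
where $A_n^m=\frac1n\sum_j\sum_{k\in B_j}\big(f(\xi_n(k/n),\eta_k)-f(\xi(jT/m),\eta_k)\big)$ is the freezing error, $B_n^m=\sum_j\big(\frac1n\sum_{k\in B_j}f(\xi(jT/m),\eta_k)-\frac Tm\bar f(\xi(jT/m))\big)$ is the ergodic error, and $C^m=\sum_j\frac Tm\bar f(\xi(jT/m))-\int_0^T\bar f(\xi(t))\,\de t$ is the Riemann-sum error. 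I will send $n\to\infty$ first (with $m$ fixed) and then $m\to\infty$.

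The heart of the argument is a uniform-in-space ergodic theorem for $\{\eta_k\}$: for each $R\in\mbN$ and reals $0\le\alpha<\beta\le T$,
\[
\Sup_{x\in[-R,R]^d}\Big|\frac1n\Sum_{k=[\alpha n]}^{[\beta n]-1} f(x,\eta_k)-(\beta-\alpha)\bar f(x)\Big|\to0\quad\text{a.s.}
\]
To prove it I fix $\ve>0$, use uniform continuity of $f$ to choose $\delta$ with $|x-y|<\delta\Rightarrow\Sup_\eta|f(x,\eta)-f(y,\eta)|<\ve$ (which also forces $|\bar f(x)-\bar f(y)|\le\ve$), cover $[-R,R]^d$ by finitely many $\delta$-balls with rational centers $x_1,\dots,x_L$, and apply Birkhoff's theorem at each $x_i$; the block sums converge because $\frac1n\sum_{k=[\alpha n]}^{[\beta n]-1}g(\eta_k)=\frac{[\beta n]}n\cdot\frac1{[\beta n]}\sum_{k<[\beta n]}g(\eta_k)-\frac{[\alpha n]}n\cdot\frac1{[\alpha n]}\sum_{k<[\alpha n]}g(\eta_k)\to(\beta-\alpha)\E g(\eta_0)$. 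Intersecting over the countably many choices of $R$, of $\ve\in\{1/p\}$, of net centers, and of the partition endpoints $jT/m$ ($0\le j\le m$, $m\in\mbN$), I obtain one full-measure event $\Omega_0$ on which the display holds simultaneously for all $x\in[-R,R]^d$. Since the convergence is uniform in $x$, I may substitute the random point $x=\xi(jT/m)$: as $\xi$ is a.s. continuous, hence bounded on $[0,T]$, we have $\xi([0,T])\subset[-R,R]^d$ for some random $R\in\mbN$, whence $B_n^m\to0$ a.s. as $n\to\infty$ for each fixed $m$. Notably, the uniformity in $x$ is exactly what licenses the substitution, so no independence between $\{\eta_k\}$ and $\xi$ is needed at this step.

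The remaining terms are handled by routine estimates. For $A_n^m$, put $\gamma_n:=\Sup_{t\in[0,T]}|\xi_n(t)-\xi(t)|$ and let $\modf_\xi$ denote the (a.s. finite) modulus of continuity of $\xi$ on $[0,T]$; for $k\in B_j$ one has $|\xi_n(k/n)-\xi(jT/m)|\le\gamma_n+\modf_\xi(T/m)$, so by uniform continuity of $f$ (with modulus $\modf_f$) each summand is at most $\modf_f(\gamma_n+\modf_\xi(T/m))$ and $|A_n^m|\le T\,\modf_f(\gamma_n+\modf_\xi(T/m))$; since $\gamma_n\to0$ a.s., $\limsup_n|A_n^m|\le T\,\modf_f(\modf_\xi(T/m))$. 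For $C^m$, the function $\bar f$ is continuous (dominated convergence, using boundedness and continuity of $f$), hence $t\mapsto\bar f(\xi(t))$ is continuous on $[0,T]$ and its left-endpoint Riemann sums converge, so $C^m\to0$ a.s. as $m\to\infty$. Combining the three bounds,
\[
\limsup_{n\to\infty}\Big|\frac1n\Sum_{k\le nT} f\big(\xi_n(\tfrac kn),\eta_k\big)-\Int_0^T\bar f(\xi(t))\,\de t\Big|\le T\,\modf_f(\modf_\xi(T/m))+|C^m|\quad\text{a.s.},
\]
and letting $m\to\infty$ (with $\modf_\xi(T/m)\to0$ and $\modf_f(0+)=0$) completes the proof.

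The step I expect to be the main obstacle is upgrading the pointwise ergodic theorem to the uniform-in-$x$ version and legitimately evaluating it at the random argument $\xi(jT/m)$: one must ensure the single exceptional null set is independent of $x$, which the finite-net-plus-uniform-continuity argument guarantees, and that $\xi$ remains in a fixed compact set so only countably many radii $R$ enter. Once this uniform ergodic theorem is secured, the freezing and Riemann-sum estimates are entirely standard.
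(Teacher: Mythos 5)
Your proof is correct and follows essentially the same strategy as the paper's: block the time axis, freeze the spatial argument on each block, reduce the random spatial point to a finite $\delta$-net via uniform continuity of $f$ (your uniform-in-$x$ ergodic theorem is exactly the paper's maximum over grid points $[p\delta]$, $|p\delta|\le M$), apply Birkhoff's theorem at the finitely many net points, and finish with a Riemann-sum limit in $m$. The only differences are cosmetic — you freeze at $\xi(jT/m)$ instead of $\xi_n(j/m)$ and you make the null-set bookkeeping explicit, which if anything is tidier than the paper's inline version.
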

\begin{proof}
For simplicity let us prove the lemma for $T=1$ only.

Let $\ve>0$ be arbitrary. Choose $\delta>0$ such that
$$
\forall x,y\in\mbR^d, \ |x-y|<\delta, \ \ \forall z\in\mbR \ \ |f(x,z)- f(y,z)|<\ve.
$$
Let $M>0, N\in\mbN$ be such that
$$
\Pb(\forall n\geq N \ \ \sup_{t\in[0,1]}|\xi_n(t)-\xi(t)|<\delta, \ \sup_{t\in[0,1]}|\xi_n(t)|\leq M)>1-\ve.
$$
Set $\Omega_\ve:=\{\forall n\geq N \ \ \sup_{t\in[0,1]}|\xi_n(t)-\xi(t)|<\delta, \ \sup_{t\in[0,1]}|\xi_n(t)|\leq M\}.$

Then for each $\omega\in \Omega_\ve$ and any $m> 1/\delta$
$$
\left|\frac 1n\sum_{k=1}^n f(\xi_n( k/n), \eta_k) - \frac 1n \sum_{j=0}^{m-1}\sum_{j/m\leq k/n<(j+1)/m}  f(\xi_n( j/m), \eta_k)\right|<\ve.
$$
Observe that for each $\omega\in \Omega_\ve$ and any $m> 1/\delta$
$$
   \frac 1n  \left|\sum_{j=0}^{m-1}  \sum_{j/m\leq k/n<(j+1)/m} \left(  f(\xi_n(j/m ), \eta_k)-  \bar f(\xi_n(j/m)) \right)\right|\leq
$$
$$
    \frac 2n   \sum_{j=0}^{m-1} \max_{p\leq M\delta} \left | \sum_{j/m\leq k/n<(j+1)/m} \left( f( [p\delta], \eta_k)
-  \bar f([p\delta])\right)\right| +2\ve.
$$
Since $f$ is bounded, by the ergodic theorem, for any fixed $m$ we have the convergence
$$
 \frac 2n   \sum_{j=0}^{m-1} \max_{p\leq M\delta} \left | \sum_{j/m\leq k/n<(j+1)/m} \left( f( [p\delta], \eta_k)
-  \bar f([p\delta])\right)\right| 
 \toas 0, n\to\infty.
$$
It follows from the previous estimates that for a.a.  $\omega\in\Omega_\ve$ and all $m> 1/\delta$ 
$$
\limsup_{n\to\infty}\left|\frac 1n\sum_{k=1}^n f(\xi_n(k/n), \eta_k) - \int_0^1\bar f(\xi(t))dt
\right|\leq 
$$
$$
\limsup_{n\to\infty}\left|\frac 1n\sum_{k=1}^n f(\xi_n(k/n), \eta_k)  
- \frac 1n \sum_{j=0}^{m-1}\sum_{j/m\leq k/n<(j+1)/m}  f(\xi_n( j/m), \eta_k)\right|+
$$
$$
\limsup_{n\to\infty}\left| \frac 1n \sum_{j=0}^{m-1}\sum_{j/m\leq k/n<(j+1)/m}  f(\xi_n( j/m), \eta_k)-
\ \frac 1n \sum_{j=0}^{m-1}  \sum_{j/m\leq k/n<(j+1)/m}  \bar f(\xi_n(j/m))\right|+
$$
$$
\limsup_{n\to\infty}\left|\ \frac 1n \sum_{j=0}^{m-1}  \sum_{j/m\leq k/n<(j+1)/m}  \bar f(\xi_n(j/m)) 
- \ \frac 1n \sum_{j=0}^{m-1}  \sum_{j/m\leq k/n<(j+1)/m}  \bar f(\xi(j/m))\right|+
$$
$$
\limsup_{n\to\infty}\left|\frac 1n \sum_{j=0}^{m-1}  \sum_{j/m\leq k/n<(j+1)/m}  \bar f(\xi(j/m)) - \int_0^T\bar f(\xi(t))dt \right|\leq
$$
$$
4\ve+\left|\frac 1m\sum_{j=0}^{m-1}    \bar f(\xi(j/m)) - \int_0^1\bar f(\xi(t))dt \right|.
$$
Passing $m\to\infty$ we get for a.a. $\omega\in\Omega_\ve$
$$
\limsup_{n\to\infty}\left|\frac 1n\sum_{k=1}^n f(\xi_n(k/n), \eta_k) - \int_0^1\bar f(\xi(t))dt
\right|\leq  4\ve.
$$

Since $\ve>0$ were  arbitrary, this completes  the proof of   Lemma \ref{lem3}  and hence   Theorem \ref{thm1}.
\end{proof}
\end{proof}
\begin{remk}
Assumption of boundedness and uniform continuity of $\vf$ may be relaxed.
 
We used boundedness of $\vf$ when we apply dominated convergence theorem in Lemma \ref{lem2},  
 and also when we applied Novikov's theorem  to  \eqref{eq_int}, or applying ergodic theorem in Lemma \ref{lem3}.      

Using truncation arguments  
it can be proved that assumption of boundedness of   $\vf$ can be replaced by the linear growth condition with respect to the   second
argument. 
To guarantee that $p^{(n)}_{i,k}$ in \eqref{eq_ERW1} is a probability we have to define it by
$p^{(n)}_{i,k}=(\frac12 (1+\ve^{(n)}_{k,X^{(n)}(k),i, \omega_k}))\vee 0)\wedge 1.$

If $\vf$  depends only on the first three of its coordinates, i.e.,
 $\vf(t,x,l,\omega)=\vf(t,x,l),$ we  used only   the  continuity  of $\vf$, so   the  uniform continuity condition is an extra assumption.

If $\omega_k$ are bounded random variables, the uniform continuity condition can be replaced by only continuity assumption too.
  \end{remk}

\end{document}